\newtheorem{theorem}{Theorem}
\newtheorem*{theorem*}{Theorem}
\newtheorem*{lemma*}{Lemma}
\newtheorem*{corollary*}{Corollary}
\newtheorem{proposition}[theorem]{Proposition}
\newtheorem{definition}[theorem]{Definition}
\newtheorem{lemma}[theorem]{Lemma}
\newtheorem{corollary}[theorem]{Corollary}
\newcommand{\eend}{\mbox{End}}
\newcommand{\diff}{\mbox{Diff}}
\newcommand{\der}{\mbox{Der}}
\newcommand{\Hom}{\mbox{Hom}}
\newcommand{\hder}{\mbox{HDer}}
\newcommand{\aut}{\mbox{Aut}}
\title{Differential Operators on Modular Extensions}
\author{Matthew Wechter}
\begin{document}
\maketitle

\section{Introduction}
\indent Classical Galois theory is the study of finite separable field extensions and their relation to the group of relative automorphisms of a field extension.  If $P/k$ is a field extension, there exists a unique maximal intermediate subfield $\Sigma$ such that $\Sigma/k$ is separable \cite[IV, \S1]{Jacobson}.  $\Sigma$ is called the {\bfseries separable closure} of $k$ in $P$.  Classical Galois theory provides information about $\Sigma/k$ when $\Sigma/k$ is a finite extension, but no information about the purely inseparable extension $P/\Sigma$.

The lattice of subfields of a finite purely inseparable extension is often very large, and we will restrict our attention to a certain type of purely inseparable extension.  A finite field extension $L/K$ is called {\bfseries modular} if there exists a subset $\{x_1,\ldots, x_n\}$ such that $\displaystyle L\cong K(x_1)\otimes_K \cdots\otimes_K K(x_n)$.  The first careful study of modular extensions was begun by Sweedler \cite{Sweedler}, and Chase \cite{ChaseGalois} proved the existence of a Galois correspondence between modular extensions and special sub-group schemes of the automorphism scheme of a purely inseparable extension.  

We seek to describe modular extensions by studying the endomorphisms of the field extension.  It is a consequence of the Jacobson-Bourbaki Theorem \cite[p.22 Theorem 2]{Jacobson} that there is a $1-1$ correspondence between intermediate subfields of a finite purely inseparable extension $L/K$ and certain subalgebras of the ring of $K$-linear endomorphisms of the extension.  In fact, unlike the classical Galois theory regarding intermediate normal subextensions of a field extension, the Jacobson-Bourbaki Theorem  gives a Galois correspondence between intermediate subfields and subalgebras of the algebra of endomorphisms in which the type of subfield or field extension is irrelevant.

For any ring extension $B/A$, define $\diff_AB$ to be the ring of differential operators of $B$ which are linear with respect to $A$.  Combining the Jacobson-Bourbaki Theorem with the well-known fact that $\eend_KL=\diff_KL$ whenever $L/K$ is finite purely inseparable, one can ask whether the subalgebras $\diff_{K'}L\subset\diff_KL$ which correspond to intermediate subfields $L/K'/K$ with $L/K'$ modular can be characterized?

Setting $\mbox{char}\,K=p>0$, in this paper we define $L^{p^i}$-subspaces of the $L$-vector space $\diff_KL$, denoted by $\mathcal{A}_{L/K,\,i}\subset \diff_KL$, which are characterized by the action of the differential operators on the subfields $L^{p^i}\subset L$.  Using these objects, the question above is answered by the main theorem:

\newtheorem*{maintheorem}{Theorem \ref{thm:modulardiffops}}
\begin{maintheorem}
Let $L/K$ be a finite purely inseparable extension of exponent $e$ and let $\mbox{char } K=p>0$.  Then $L/K$ is modular if and only if for all $0<i\le e-1$, the multiplication homomorphism $L\otimes_{L^{p^i}}\mathcal{A}_{L/K,\,i}\to\diff^{\,p^i}_KL$ is a surjection, where $\diff^{\,p^i}_KL$ is the space of differential operators of $L/K$ of order $\le p^i$.  That is, for each $i$, $\mathcal{A}_{L/K,\,i}$ spans $\diff^{\,p^i}_KL$ as an $L$-subspace.
\end{maintheorem}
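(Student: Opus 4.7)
The plan is to work throughout with a Pickert generating sequence $\{x_1, \ldots, x_n\}$ for $L/K$ with associated non-increasing exponents $e_1 \geq \cdots \geq e_n$, and to analyze $\diff^{\,p^i}_K L$ through its basis of divided-power operators $\partial^{[\mathbf{a}]} = \partial^{[a_1]}_{x_1} \cdots \partial^{[a_n]}_{x_n}$ with $0 \leq a_k < p^{e_k}$. The technical heart of both implications is the observation that the action of $\partial^{[\mathbf{a}]}$ on a $p^j$-th power monomial $x_k^{p^j m}$ is governed by $\binom{p^j m}{a_k}$, which by Lucas' theorem either vanishes or forces $p^j \mid a_k$, so that the surviving term automatically stays inside $L^{p^j}$.

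For the forward direction, assume $L \cong K(x_1) \otimes_K \cdots \otimes_K K(x_n)$. Then $L^{p^j} = K^{p^j}(x_1^{p^j}, \ldots, x_n^{p^j})$ inherits the same tensor structure, and each $\partial_{x_k}^{[a]}$ is globally well-defined on $L$. Applied to a typical generator of $L^{p^j}$, each $\partial^{[\mathbf{a}]}$ either vanishes or lands in $L^{p^j}$ by the Lucas observation, so every monomial $\partial^{[\mathbf{a}]}$ with $\sum a_k \leq p^i$ already belongs to $\mathcal{A}_{L/K,i}$. Since these monomials form an $L$-basis of $\diff^{\,p^i}_K L$, the surjectivity of the multiplication map is immediate.

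The reverse direction carries the content, and I would proceed by induction on the exponent $e$. The case $e=1$ is vacuous, since purely inseparable extensions of exponent one are automatically modular. For the inductive step, work with the tower $K \subset L_1 \subset \cdots \subset L_e = L$, where $L_r = K^{p^{-r}} \cap L$, and invoke the differential form of Sweedler's modularity criterion: $L/K$ is modular if and only if the Frobenius-induced multiplication maps $L_{r-1} \otimes_{F_r} \Omega^1_{L_r/L_{r-1}} \to \Omega^1_{L_{r-1}/L_{r-2}}$ are injective for every $2 \le r \le e$. The strategy is contrapositive: a nonzero kernel element of one of these multiplication maps should be enhanced, via duality between differentials and first-order operators together with the normal-extension lemma for differential operators along a Pickert sequence, into an order-$p^{r-1}$ operator on $L$ whose symbol cannot be produced as an $L$-linear combination of elements of $\mathcal{A}_{L/K,r-1}$.

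The main obstacle will be controlling the order during this dualization. The normal-extension lemma is essential here: it guarantees that when a lower-order operator defined on an intermediate subfield $K(x_1,\ldots,x_i)$ is extended to $K(x_1,\ldots,x_{i+1})$ by the canonical formula, its order does not jump. Without that control, an obstruction detected at an intermediate layer could produce an operator of artificially inflated order that still sits harmlessly inside $L \cdot \mathcal{A}_{L/K,i'}$ for some $i' > r-1$. Pinning the constructed operator to order exactly $p^{r-1}$ while simultaneously exhibiting its symbol as provably outside $L \cdot \mathcal{A}_{L/K,r-1}$, via the Lucas-style divisibility constraints on the multi-indices $\mathbf{a}$ that appear in any element of $\mathcal{A}_{L/K,r-1}$, is the delicate step in the argument.
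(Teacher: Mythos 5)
Your forward direction is sound and is essentially the paper's argument: decompose $\diff_KL$ as the tensor product of the $\diff_KK(x_j)$, observe via Lucas' theorem that each divided-power monomial $\partial^{[\mathbf{a}]}$ with $\sum a_k\le p^i$ either kills a $p^j$-th power or sends it into $L^{p^j}$, and conclude that these monomials, which $L$-span $\diff^{\,p^i}_KL$, already lie in $\mathcal{A}_{L/K,\,i}$. If anything you are more explicit than the paper about why membership in $\mathcal{A}_{L/K,\,i}$ holds.

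The converse, however, is where the theorem lives, and your proposal does not prove it. You route the argument through a modularity criterion phrased in terms of injectivity of maps $L_{r-1}\otimes_{F_r}\Omega^1_{L_r/L_{r-1}}\to\Omega^1_{L_{r-1}/L_{r-2}}$; that criterion is not established here, and duality with $\Omega^1$ only produces first-order operators, so the promotion of a kernel element of such a map to an operator of order exactly $p^{r-1}$ whose symbol escapes $L\cdot\mathcal{A}_{L/K,\,r-1}$ is a substantial leap that you do not carry out --- indeed you explicitly defer it as ``the delicate step.'' That step \emph{is} the theorem. The paper instead uses Rasala's criterion (Proposition \ref{prop:pickertmodular}): non-modularity forces some $x_i$ in a Pickert sequence to have a structure equation $z^q=f(x_1^q,\ldots,x_{i-1}^q)$ with a coefficient not in $L^q$; a short argument shows $f$ must then have at least two monomials with non-$q$th-power coefficients, which permits a choice of variable $x_1$ and of a $p$-power $Q$ so that the normal extension $\widetilde{D}$ of $\bigl(\frac{d}{dx_1}\bigr)^{[Q]}$ (order preserved by Lemma \ref{lem:diffopextension}) satisfies $\widetilde{D}(x_1^Q)=1\in L^Q$ but $\widetilde{D}(z^q)\notin L^q$, exhibiting an operator outside the $L$-span of every $\mathcal{A}_{L/K,\,i}$. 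Nothing in your outline produces this witness: you correctly identify the normal-extension lemma as the tool for controlling order, but you never specify which operator to extend, why its order can be pinned to the right value, or why it fails to preserve some $L^{p^j}$. Also, your proposed induction on the exponent $e$ is never used for anything; the paper's argument needs no induction of that kind. As it stands the converse direction is a plan with its central construction missing.
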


It follows directly from this theorem that we can use these subsets to determine the maximal modular subextension of a finite purely inseparable extension:

\newtheorem*{ccorollary}{Corollary \ref{cor:biggestmodularextension}}
\begin{ccorollary}
Let $L/K$ be as in the theorem.  Let $\mathcal{D}$ be the $L$-subalgebra of $\diff_KL$ generated by the $\mathcal{A}_i$.  Then $\mathcal{D}$ is the largest subalgebra of $\diff_KL$ such that $L/L^{\mathcal{D}}$ is a modular extension.
\end{ccorollary}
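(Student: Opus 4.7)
My plan is to split the proof into two parts: (i) verifying that $L/L^{\mathcal{D}}$ is itself modular, and (ii) showing that any $L$-subalgebra $\mathcal{D}' \subseteq \diff_K L$ with $L/L^{\mathcal{D}'}$ modular satisfies $\mathcal{D}' \subseteq \mathcal{D}$. Both parts exploit the Jacobson-Bourbaki correspondence, which identifies $L$-subalgebras $\mathcal{R} \subseteq \diff_K L$ with intermediate subfields of $L/K$ via $\mathcal{R} = \diff_{L^{\mathcal{R}}} L$.

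I would dispatch the maximality claim (ii) first, as it is essentially formal. Given $\mathcal{D}'$ with $L/L^{\mathcal{D}'}$ modular, Jacobson-Bourbaki yields $\mathcal{D}' = \diff_{L^{\mathcal{D}'}} L$, and Theorem \ref{thm:modulardiffops} applied to $L/L^{\mathcal{D}'}$ produces $\diff^{p^i}_{L^{\mathcal{D}'}} L = L \cdot \mathcal{A}_{L/L^{\mathcal{D}'}, i}$ for each admissible $i$. The containment $K \subseteq L^{\mathcal{D}'}$ implies $\mathcal{A}_{L/L^{\mathcal{D}'}, i} \subseteq \mathcal{A}_{L/K, i} \subseteq \mathcal{D}$. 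Since $\diff_{L^{\mathcal{D}'}} L$ is generated as an $L$-algebra by the divided-power partials along a Pickert generating sequence of $L/L^{\mathcal{D}'}$ — all of which have order at most $p^{e(L/L^{\mathcal{D}'})-1}$ and hence lie in the subspaces $\diff^{p^i}_{L^{\mathcal{D}'}} L$ already captured above — we conclude $\mathcal{D}' = \diff_{L^{\mathcal{D}'}} L \subseteq \mathcal{D}$.

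The modularity of $L/L^{\mathcal{D}}$ is the heart of the argument. Set $F = L^{\mathcal{D}}$. Jacobson-Bourbaki gives $\mathcal{D} = \diff_F L$, and one checks $\mathcal{A}_{L/F, i} = \mathcal{A}_{L/K, i}$: the inclusion $\mathcal{A}_{L/K, i} \subseteq \mathcal{A}_{L/F, i}$ holds because $\mathcal{A}_{L/K, i} \subseteq \mathcal{D} = \diff_F L$ forces $F$-linearity, while $K \subseteq F$ gives the reverse. Thus, applying Theorem \ref{thm:modulardiffops} to $L/F$ reduces modularity to the surjectivity of $L \otimes_{L^{p^i}} \mathcal{A}_{L/K, i} \to \diff^{p^i}_F L$ for each $0 < i \le e(L/F) - 1$, i.e., to showing every $D \in \mathcal{D}$ of order $\le p^i$ is an $L$-combination of elements of $\mathcal{A}_{L/K, i}$.

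The principal obstacle is precisely this surjectivity. A general element of $\mathcal{D}$ is an $L$-combination of iterated composites of operators from various $\mathcal{A}_{L/K, j}$; such a composite need not lie in any single $\mathcal{A}_{L/K, i}$, and the total-order bound must be exploited to recover the decomposition. My approach would be to fix a Pickert generating sequence $\{x_1, \ldots, x_n\}$ of $L/K$ with exponents $e_1 \ge \cdots \ge e_n$ and to leverage the explicit basis of $\diff^{p^i}_K L$ constructed in the proof of Theorem \ref{thm:modulardiffops}. Its distinguished elements — the normal extensions (via Lemma \ref{lem:diffopextension}) of the divided-power partials $\partial_{x_m}^{[p^i]}$ for those $m$ whose exponent over $K(x_1, \ldots, x_{m-1})$ exceeds $i$ — already lie in $\mathcal{A}_{L/K, i}$. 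A descending induction on order, subtracting off the leading symbol of a given $D \in \mathcal{D} \cap \diff^{p^i}_K L$ at each stage, then expresses $D$ as an $L$-combination of these distinguished operators. With the surjectivity established, Theorem \ref{thm:modulardiffops} yields the modularity of $L/L^{\mathcal{D}}$, and the corollary follows.
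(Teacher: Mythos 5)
The paper itself offers no argument for this corollary (it is simply declared ``immediate''), so your proposal has to stand on its own. Its skeleton is right, and the maximality half is correct: for any $L$-subalgebra $\mathcal{D}'$ with $L/L^{\mathcal{D}'}$ modular, Jacobson--Bourbaki gives $\mathcal{D}'=\diff_{L^{\mathcal{D}'}}L$, Theorem \ref{thm:modulardiffops} applied to $L/L^{\mathcal{D}'}$ shows this algebra is generated over $L$ by the sets $\mathcal{A}_{L/L^{\mathcal{D}'},\,i}$, and these sit inside $\mathcal{A}_{L/K,\,i}\subseteq\mathcal{D}$ because order and the conditions $D(L^{p^j})\subseteq L^{p^j}$ do not depend on the base field. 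Your reduction of the other half --- modularity of $L/L^{\mathcal{D}}$ --- to the surjectivity of $L\otimes_{L^{p^i}}\mathcal{A}_{L/K,\,i}\to\diff^{\,p^i}_FL$ with $F=L^{\mathcal{D}}$ (using $\mathcal{A}_{L/F,\,i}=\mathcal{A}_{L/K,\,i}$) is also sound, and you correctly identify it as the crux.

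The gap is in how you propose to close that crux. The assertion that the normal extensions of $\left(\frac{d}{dx_m}\right)^{[p^i]}$, for those $m$ whose exponent over $K(x_1,\ldots,x_{m-1})$ exceeds $i$, ``already lie in $\mathcal{A}_{L/K,\,i}$'' is false in general; indeed, the non-modular direction of Theorem \ref{thm:modulardiffops} is proved by exhibiting exactly such a normal extension that fails to lie there. In that proof $\widetilde{D}$ extends $\left(\frac{d}{dx_1}\right)^{[Q]}$ with $Q=p^l$, where $p^{e_i}\le Q<p^{e_1}$ (so $m=1$ has exponent $e_1>l$), yet $\widetilde{D}(z^{p^{e_i}})\notin L^{p^{e_i}}$ with $e_i\le l$, so $\widetilde{D}\notin\mathcal{A}_{L/K,\,l}$. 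Consequently the ``distinguished basis'' you want to run a descending induction against is not known to consist of elements of the $\mathcal{A}_{L/K,\,i}$ (nor even of $\mathcal{D}$) until one already knows the relevant extension is modular, which is what is being proved. A cleaner way to see what is actually missing: let $F_0$ be the minimal intermediate field with $L/F_0$ modular (the field of constants of all higher derivations). The theorem gives $\diff_{F_0}L\subseteq\mathcal{D}$ essentially for free, hence $L^{\mathcal{D}}\subseteq F_0$, and since no field strictly below $F_0$ yields a modular extension, the entire corollary reduces to the single claim $L^{\mathcal{D}}=F_0$, i.e., that every operator in every $\mathcal{A}_{L/K,\,i}$ commutes with multiplication by elements of $F_0$. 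That claim is equivalent to your surjectivity statement and still needs a genuine argument --- for instance via an explicit description of $F_0$ or a direct commutator computation --- which neither your sketch nor the paper supplies.
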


The first section of this paper will provide a simple introduction to differential operators on purely inseparable extensions, while the second section will state properties of generators for such extensions.  The third section contains the main theorem and corollary, as well as a technical lemma that may have future use to provide explicit constructions of differential operators on field extensions.

\section{Differential Operators on Finite Purely Inseparable Extensions}\label{ch:Jacobson}

\indent   To study the endomorphisms of a finite purely inseparable extension, it suffices to study the ring of differential operators of the extension.

\begin{proposition}\label{prop:enddiff}
Let $L/K$ be a finite purely inseparable extension and suppose $K$ has characteristic $p>0$.  Then $\diff_KL=\eend_KL$.
\end{proposition}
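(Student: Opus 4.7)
The plan is to identify $\diff_KL$ and $\eend_KL$ via the standard bimodule description of differential operators, and then to exploit the purely inseparable hypothesis to force nilpotency of the relevant ideal. Recall, in the Grothendieck formulation, that
$$\diff^{\,n}_KL \;\cong\; \Hom_L\!\bigl((L\otimes_K L)/I^{n+1},\,L\bigr),$$
where $I=\ker\bigl(L\otimes_K L \xrightarrow{\,m\,} L\bigr)$ and $L\otimes_K L$ is regarded as an $L$-module through the left tensor factor. By tensor-hom adjunction, $\eend_KL = \Hom_K(L,L)\cong \Hom_L(L\otimes_K L,\,L)$. The inclusion $\diff_KL \subseteq \eend_KL$ is then immediate, and equality will follow as soon as $I$ is nilpotent: if $I^{N+1}=0$, then $L\otimes_KL = (L\otimes_KL)/I^{N+1}$ and every $K$-linear endomorphism becomes a differential operator of order $\le N$.

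To show $I$ is nilpotent, I would use that $L/K$ is finite of some exponent $e$, so $L^{p^e}\subseteq K$. Choose a finite $K$-algebra generating set $x_1,\ldots,x_m$ for $L$. Since $L\otimes_K L$ is generated as an $L$-algebra (via the left factor) by the elements $1\otimes x_i$, and $1\otimes x_i = x_i\cdot 1 - y_i$ with $y_i := x_i\otimes 1 - 1\otimes x_i$, the ideal $I$ is generated by $y_1,\ldots,y_m$. Because $L\otimes_KL$ has characteristic $p$, Frobenius gives
$$y_i^{p^e} \;=\; x_i^{p^e}\otimes 1 - 1\otimes x_i^{p^e} \;=\; 0,$$
since $x_i^{p^e}\in K$. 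Commutativity of $L\otimes_K L$ then forces every monomial in the $y_i$ of total degree at least $m(p^e-1)+1$ to contain some $y_i$ to a power $\ge p^e$, hence to vanish, so $I^{m(p^e-1)+1}=0$.

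The main obstacle is the bookkeeping around the tensor-hom identification: verifying the standard dictionary between the commutator/iterated-bracket definition of a differential operator of order $\le n$ and the condition of factoring through $(L\otimes_KL)/I^{n+1}$, and being careful about which $L$-action is used on the tensor product. Once that dictionary is in place, the purely inseparable hypothesis enters in the single Frobenius line above, and the proof is complete.
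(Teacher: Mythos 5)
Your proof is correct and follows essentially the same route as the paper: identify $\eend_KL$ with $\Hom_L(L\otimes_KL,\,L)$ and use Frobenius together with $L^{p^e}\subseteq K$ to make the diagonal ideal $I$ nilpotent, so every $K$-linear endomorphism factors through some $(L\otimes_KL)/I^{N+1}$ and is therefore a differential operator of bounded order. If anything, your explicit nilpotency bound $I^{m(p^e-1)+1}=0$ is more careful than the paper's bare assertion that $I^{p^e}=0$, which as stated is only accurate in the monogenic case.
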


\begin{proof}
Since $L/K$ is a finite extension, the degree of the extension is a power of $p$.  Hence there exists an $e\in \mathbb{Z}$ such that $L^{p^e}\subseteq K$.  Thus, if $I$ is the ideal of the diagonal of $L/K$, then $\displaystyle I^{p^e}=0\in L\otimes_K L$ and $\displaystyle \left(L\otimes_K L\right)/I^{p^e}\cong L\otimes_K L$.  Since there is a $1-1$ correspondence between elements of $\eend_KL$ and $\Hom_L(L\otimes_K L,\,L)$, where $L$ acts on $L\otimes_K L$ on the left, then every endomorphism in $\eend_KL$ is a differential operator of order $\le p^e$ by the definition of a differential operator (cf. \cite[\S 16.8]{EGAIV}).
\end{proof}

Galois correspondences for purely inseparable extensions were first studied by Jacobson, and he showed a close relationship between exponent $1$ extensions and their modules of derivations.  This correspondence follows directly from the Jacobson-Bourbaki Theorem and cannot be extended to include extensions of higher exponent.  Sweedler's study of modular extensions relates Hasse-Schmidt derivations (or higher derivations) \cite{hasse} to certain modular subextensions and provides motivation for studying the higher order differential operators on purely inseparable extensions.

\begin{theorem}\label{Sweedlertheorem} \cite[Theorem 1]{Sweedler}
Let $k$ be a field of characteristic $p>0$ and suppose $K/k$ is a finite purely inseparable extension of exponent $e$.  The following are equivalent:
\begin{enumerate}
\item\label{Sweedler1}
$\displaystyle K/k$ is modular.
\item\label{Sweedler2}
There exist higher derivations of $K$ for which $k$ is the subfield of constants.
\item\label{Sweedler3}
$K^{p^i}$ is linearly disjoint from $k$ for all positive integers $i$.
\end{enumerate}
\end{theorem}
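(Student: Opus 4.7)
The plan is to establish the three implications $(1)\Rightarrow(2)\Rightarrow(3)\Rightarrow(1)$, with the main content concentrated in the last step. Each implication draws on a different feature of the modular structure: an explicit construction, the Frobenius-equivariance of higher derivations, and an inductive lift through the Frobenius tower.

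For $(1)\Rightarrow(2)$ I would begin with a decomposition $K\cong k(x_1)\otimes_k\cdots\otimes_k k(x_n)$ and define, for each $i$, the higher derivation $D^{(i)}$ of $K$ that acts on the $i$-th tensor factor by $D^{(i)}_j(x_i^m)=\binom{m}{j}x_i^{m-j}$ and as the identity on the remaining factors. The field of constants of $D^{(i)}$ is the subfield generated by the other factors, namely $k(x_1,\ldots,\widehat{x_i},\ldots,x_n)$, and the intersection of these subfields for $i=1,\ldots,n$ is exactly $k$.

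For $(2)\Rightarrow(3)$ the key input is that a higher derivation viewed as a ring homomorphism $D\colon K\to K[[t]]$ commutes with Frobenius, so $D(a^{p^i})=D(a)^{p^i}$; consequently the components of $D$ on $K^{p^i}$ vanish outside degrees divisible by $p^i$ and, after the substitution $t\mapsto t^{p^i}$, the surviving components assemble into a higher derivation of $K^{p^i}$ whose field of constants is $k^{p^i}$. I would then argue by minimal counterexample: a hypothetical $k$-linear dependence of shortest length among elements $a_j=c_j^{p^i}\in K^{p^i}$ that are linearly independent over $k^{p^i}$ must involve some $c_j$ outside $k$; applying a suitable component $D_{p^i m}$ of a derivation from the given family produces a new relation $\sum_j D_m(c_j)^{p^i}b_j=0$ of the same form but with strictly fewer linearly independent terms, contradicting minimality.

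The content of the theorem is $(3)\Rightarrow(1)$, which I would prove by induction on the exponent $e$. The base case $e=1$ is classical: when $K^p\subseteq k$, any $p$-basis of $K/k$ gives the tensor decomposition directly. For the inductive step I would work with the tower $k\subseteq kK^p\subseteq K$; the subextension $K^p/k^p$ has exponent $e-1$ and inherits condition (3) at indices shifted by one, so induction produces a decomposition $K^p=\bigotimes_{k^p}k^p(y_r)$. The task is then to lift this to $K$ by choosing roots $x_r\in K$ with $x_r^p=y_r$, together with a $p$-basis of $K$ over $kK^p$, and to verify that the resulting generators yield a tensor product decomposition of $K$ over $k$. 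The principal obstacle is precisely this lifting: although $y_r$ lies freely in its factor of $K^p$, the root $x_r$ may a priori satisfy unexpected $p$-power relations in $K$ arising from elements of $k\setminus k^p$. Condition (3) at $i=1$ rules this out in principle, but a coherent tensor product decomposition requires compatibility across all levels; my plan is to follow Sweedler's approach by constructing a generating array indexed by the filtration $k\subseteq kK^{p^{e-1}}\subseteq\cdots\subseteq kK^p\subseteq K$, choosing $p$-bases at successive levels so that the $p$-th powers of generators at level $i$ lie in the $k$-span of generators at level $i+1$, and using condition (3) at each level to guarantee that such compatible choices exist.
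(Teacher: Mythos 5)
This theorem is quoted in the paper from Sweedler's work (\cite[Theorem 1]{Sweedler}) and is not proved there, so there is no internal proof to compare against; your proposal has to stand on its own. The first two implications are sketched along standard and workable lines: for $(1)\Rightarrow(2)$ the divided-power higher derivations on each tensor factor descend to $k(x_i)\cong k[t]/(t^{q_i}-c_i)$ and their common constants are $k$, and for $(2)\Rightarrow(3)$ the identity $D_{p^i m}(a^{p^i})=D_m(a)^{p^i}$ together with a shortest-relation argument is exactly the right mechanism (though you should be explicit that linear disjointness is over $k\cap K^{p^i}$ and that you normalize one coefficient to $1$ so that applying $D_{p^im}$ genuinely shortens the relation).

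The gap is in $(3)\Rightarrow(1)$, which is the only implication with real content, and your treatment of it is a plan rather than a proof. Two concrete problems: first, the claim that $K^p/k^p$ has exponent $e-1$ is false --- Frobenius gives $K^p/k^p\cong K/k$, which has exponent $e$; the extension that has exponent $e-1$ is $kK^p/k$ (or $K^p$ over $k\cap K^p$), and it is not immediate that condition (3) for $K/k$ passes to that subextension, so the inductive hypothesis you invoke is not yet available. Second, and more seriously, the lifting of the generators $y_r$ of the lower level to compatible $p$-th roots $x_r$ in $K$ is exactly where Sweedler's triangular array of generators and the full strength of (3) at every level are needed; you identify this as ``the principal obstacle'' and then resolve it by saying you would ``follow Sweedler's approach,'' which is to say you have not resolved it. Without an explicit construction showing that the $p$-bases at successive levels of the filtration $k\subseteq kK^{p^{e-1}}\subseteq\cdots\subseteq kK^p\subseteq K$ can be chosen so that $p$-th powers of level-$i$ generators land in the prescribed span at level $i+1$ --- and a verification that (3) is what makes the obstruction vanish --- the argument for the key direction is incomplete.
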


Note that for any finite purely inseparable extension $K/k$, if $F$ is the subfield of constants of all higher derivations of $K/k$, then $K/F$ is modular and $F$ will be the smallest intermediate subfield such that $K/F$ is modular.  By the Jacobson-Bourbaki Theorem, any intermediate subfield $k\subseteq F\subseteq K$ with $K/F$ modular corresponds to a unique $K$-subalgebra of $\eend_kK$.  The main theorem will give an intrinsic description of these subalgebras.

We end this section by defining special classes of differential operators that are required to distinguish modular from non-modular extensions.  Let $A$ be a ring of prime characteristic $p>0$ and $B$ a commutative $A$-algebra.  For any positive integer $n$, write $\diff_A^nB$ for the module of differential operators of $B/A$ of order $\le n$ \cite[\S16.8.1]{EGAIV}.  Define 

$$\mathcal{A}_{B/A,\,i}=\{D\in\diff^{\,p^i}_AB:\forall j\le i,\,D(B^{p^j})\subseteq B^{p^j}\}.$$

\noindent Note that $\mathcal{A}_{B/A,\,i}$ is a $B^{p^i}$-submodule of $\diff^{\,p^i}_AB$.  For example, let $L/K$ be a modular field extension with a $p$-basis $\{x_1,\ldots, x_n\}$ such that $e_i=\exp[x_i:K]>1$ for each $1\le i\le n$.  Then a $K$-basis of $L$ is $\displaystyle\left\{x_1^{j_1}\cdots x_n^{j_n}\right\}_{0\le j_i<p^{e_i}}$.  For $m<e_i$, let $\displaystyle \left(\frac{d}{dx_i}\right)^{[m]}$ be the unique endomorphism of $L/K$ sending $\displaystyle x_1^{j_1}\cdots x_n^{j_n}$ to $\displaystyle \binom{j_i}{m}x_1^{j_1}\cdots x_i^{j_i-m}\cdots x_n^{j_n}$.  

$\mathcal{A}_{L/K,\,1}$ consists of all differential operators of order $\le p$ which map $L^p\to L^p$.  Since $\displaystyle \left(\frac{d}{dx_i}\right)^{[i]}(L^p)=0$ for $i< p$, 
\begin{align*}
\mathcal{A}_{L/K,\,1}=L^p&\bigoplus_{0<i_1+\cdots i_n<p}L\left(\frac{d}{dx_1}\right)^{[i_1]}\cdots \left(\frac{d}{dx_n}\right)^{[i_n]}\\
&\bigoplus_{0<i\le n}L^p\left(\frac{d}{dx_i}\right)^{[p]}.
\end{align*}
Similarly,
\begin{align*} 
\mathcal{A}_{L/K,\,2}=L^{p^2}&\bigoplus_{0<i_1+\cdots i_n<p}L\left(\frac{d}{dx_1}\right)^{[i_1]}\cdots \left(\frac{d}{dx_n}\right)^{[i_n]}\\
&\bigoplus_{0<i_1+\cdots i_n<p} L^p\left(\frac{d}{dx_1}\right)^{[pi_1]}\cdots \left(\frac{d}{dx_n}\right)^{[pi_n]}\\
&\bigoplus_{0<i\le n}L^{p^2}\left(\frac{d}{dx_i}\right)^{[p^2]}.
\end{align*}


For an integer $i<\exp[L:K]$, write $\Gamma^{[i]}$ for the the $i$th divided powers functor.  Since $\der_KL$ is an $L$-vector space,  
$$\displaystyle \Gamma^{[p^i]}\left(\der_KL\right)\Big/\left(\sum_{0<j<i}\Gamma^{[p^j]}\left(\der_KL\right)\otimes_L\Gamma^{[p^{i-j}]}\left(\der_KL\right)\right)$$

\noindent is naturally an $L^{p^i}$-vector space.  Call this space the \emph{$p^i$th indecomposable divided powers} of $\der_KL$, and denote it by $\displaystyle\overline{\Gamma}^{[p^i]}\left(\der_KL\right)$.  There is a natural map $\displaystyle\overline{\gamma}^{p^i}:\,\der_KL\to\overline{\Gamma}^{[p^i]}\left(\der_KL\right)$ which sends an element $D\in\der_KL$ to the image of $\gamma^{p^i}(D)\in\Gamma^{[p^i]}\left(\der_KL\right)$ in $\displaystyle \overline{\Gamma}^{[p^i]}\left(\der_KL\right)$.  $\displaystyle\overline{\gamma}^{[p^i]}$ respects addition and induces an $L^{p^i}$-space isomorphism between $L^{p^i}\otimes_L\der_KL$ and $\displaystyle\overline{\Gamma}^{[p^i]}\left(\der_KL\right)$ where the $L$-action on $L^{p^i}$ is induced by the $i$th Frobenius map.  Finally, the image of $\displaystyle\mathcal{A}_{L/K,\,i}$ under the symbol map is isomorphic to both of these $L^{p^i}$-vector spaces.



\section{Pickert Generating Sequences}
To better study the differential operators on a purely inseparable field extension, it is worthwhile to determine more properties and invariants of the extension.  For a purely inseparable extension $L/K$ and any $x\in L$, set $\exp[x:K]$ to be the exponent of $x$ in $K$, and $\exp[L:K]$ to be the exponent of $L/K$.
\begin{definition}\label{def:Pickertgeneratingsequence}
Let $L/K$ be a finite purely inseparable extension of fields of characteristic $p>0$.  A sequence $\{x_1,\ldots,x_n\}\subset L$ is called a {\bfseries Pickert generating sequence} if the $x_i$ form a $p$-basis for $L/K$ and for each $i$, 
$$\exp\left[K(x_1,\ldots,x_i):K(x_1,\ldots, x_{i-1})\right]=\exp\left[x_i:K(x_1,\ldots,x_{i-1})\right].$$
\end{definition}

Any $p$-basis of a finite purely inseparable extension can be ordered to make it a Pickert generating sequence.  Let $e_i$ denote the exponents in Definition \ref{def:Pickertgeneratingsequence}.  For any $\alpha\in L$, $\exp[\alpha:K(x_1,\ldots,x_{i-1})]\le e_i$.  Hence $e_i\ge \exp[x_{i+1}:K(x_1,\ldots,x_{i-1})]$, and $\exp[x_{i+1}:K(x_1,\ldots,x_{i-1})]\ge\exp[x_{i+1}:K(x_1,\ldots,x_{i})]=e_{i+1}$.  Thus $e_1\ge e_2\ge\cdots\ge e_n$, and 
$$\{x_1^{r_1}x_2^{r_2}\cdots x_n^{r_n}\}_{0\le r_i<p^{e_i}}$$ 
\noindent is a $K$-basis for $L$.  Rasala \cite{Rasala} showed that the sequence of exponents is independent of the choice of $p$-basis for a finite extension.

Pickert \cite{Pickert} originally proved the following theorem, which Rasala applied in his work studying properties of purely inseparable extensions. 

\begin{proposition}\label{prop:structurequations}\cite[\S3, Theorem 1]{Rasala}
Let $L/K$ be a finite purely inseparable extension of fields of characteristic $p>0$.  Suppose $\{x_1,\ldots,x_n\}$ is a Pickert generating sequence for $L/K$ with corresponding exponent sequence $\{e_i\}$.  For each $i$, 
$$x_i^{p^{e_i}}\in K(x_1^{p^{e_i}},\ldots,x_{i-1}^{p^{e_i}}).$$
\end{proposition}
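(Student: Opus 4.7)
The plan is to prove the statement by induction on $i$, with the base case $i=1$ following immediately from the definition of $e_1 = \exp[x_1 : K]$, which gives $x_1^{p^{e_1}} \in K = K(\emptyset)$.

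For the inductive step, set $e := e_i$ and $M := K(x_1, \ldots, x_{i-1})$. A subsidiary induction on $j$ using the Pickert identity $\exp[K(x_1,\ldots,x_j) : K(x_1,\ldots,x_{j-1})] = e_j$ shows that the tower $K \subset K(x_1) \subset \cdots \subset M$ has successive degrees $p^{e_j}$, so that $\mathcal{B} := \{x_1^{r_1}\cdots x_{i-1}^{r_{i-1}} : 0 \le r_j < p^{e_j}\}$ is a $K$-basis of $M$. Since the Pickert hypothesis for $x_i$ forces $x_i^{p^e} \in M$, I would expand
\[
x_i^{p^e} = \sum_{\underline{r}} c_{\underline{r}} \, x_1^{r_1} \cdots x_{i-1}^{r_{i-1}}, \qquad c_{\underline{r}} \in K,
\]
reducing the goal to showing $c_{\underline{r}} = 0$ whenever some $r_j$ is not divisible by $p^e$.

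The key technique is a Frobenius comparison. Since exponents are non-increasing, $e_1 - e \ge 0$, so raising the expansion to the $p^{e_1 - e}$-th power (Frobenius is a ring homomorphism in characteristic $p$) yields
\[
x_i^{p^{e_1}} = \sum_{\underline{r}} c_{\underline{r}}^{p^{e_1-e}} \cdot x_1^{r_1 p^{e_1-e}} \cdots x_{i-1}^{r_{i-1} p^{e_1-e}}.
\]
The left side lies in $K$ because $\exp[L:K] = e_1$, and each $c_{\underline{r}}^{p^{e_1-e}}$ remains in $K$. Each monomial on the right must then be reduced to the basis $\mathcal{B}$ by iteratively applying the Pickert relations $x_j^{p^{e_j}} \in K(x_1,\ldots,x_{j-1})$; the inductive hypothesis allows these relations to be written using only $p^{e_j}$-th, and hence (since $e_j \ge e$) only $p^e$-th, powers of the preceding variables.

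The main obstacle is the combinatorial bookkeeping through this reduction. One must verify that after all reductions, the only basis monomials appearing with nonzero coefficient in the expansion of $x_i^{p^{e_1}}$ correspond to indices $\underline{s}$ whose components are divisible by $p^e$, and that comparison with $x_i^{p^{e_1}} \in K = K\cdot 1$ forces the vanishing of the coefficients of every basis element other than the identity. Extracting the coefficient of each non-identity $\mathcal{B}$-element produces $K$-linear relations in the quantities $c_{\underline{r}}^{p^{e_1-e}}$ whose only solution, given the independence of the $x_j^{p^e}$ coming from the Pickert exponent conditions, is $c_{\underline{r}}^{p^{e_1-e}} = 0$ for every $\underline{r}$ with some $r_j$ not divisible by $p^e$. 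Injectivity of the Frobenius on $K$ in characteristic $p$ then transfers this vanishing back to the original coefficients $c_{\underline{r}}$, completing the induction.
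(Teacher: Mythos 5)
Your setup is sound as far as it goes: the base case, the computation of the successive degrees in the tower, the expansion of $x_i^{p^{e}}$ in the monomial basis $\mathcal{B}$, and the observation that applying $F^{e_1-e}$ lands the left-hand side in $K$ are all correct. The gap is concentrated in your final paragraph, which is where the entire content of the proposition lives. After reducing the right-hand side to $\mathcal{B}$, the coefficient of a fixed non-identity basis monomial is a \emph{single} $K$-linear combination of several of the quantities $c_{\underline{r}}^{\,p^{e_1-e}}$, whose coefficients are products of the structure constants of $g_2,\dots,g_{i-1}$ and of the elements $x_j^{p^{e_j}}$. Setting all of these combinations to zero gives one relation per monomial, each mixing many different $\underline{r}$; nothing forces the individual summands to vanish. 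You assert that ``the only solution is $c_{\underline{r}}^{\,p^{e_1-e}}=0$,'' appealing to ``the independence of the $x_j^{p^e}$,'' but no such independence statement is formulated, let alone proved, and it is not a formality.

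To see the issue concretely, take $i=2$: writing $x_2^{q}=\sum_{j<p^{e_1}}c_j x_1^{j}$ with $q=p^{e_2}$, $t=p^{e_1-e_2}$ and $a=x_1^{p^{e_1}}\in K$, your comparison yields, for each residue $0<r<q$, exactly one relation $\sum_{m=0}^{t-1}c_{mq+r}^{\,t}\,a^{m}=0$. Concluding $c_{mq+r}=0$ requires that $1,a,\dots,a^{t-1}$ be linearly independent over $K^{t}$; this is true, but only because $\exp[x_1:K]$ equals $e_1$ exactly (so that $a\notin K^{p}$), and it must be argued. For $i\ge 3$ the analogous relations involve products of the coefficients of the $g_j$, which are arbitrary elements of $K$ with no a priori independence, so the argument as stated does not close. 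This is precisely why the paper's (and Rasala's) proof is organized differently: it inducts on the length of the generating sequence, re-bases over $K(x_1)$ so that $x_i^{q_i}=\sum_j g_j x_1^{j}$ with $g_j\in K(x_1^{q_i},\dots,x_{i-1}^{q_i})$, and then applies the inductive hypothesis \emph{twice} --- to $\{x_2,\dots,x_i\}$ over $K(x_1)$ and to the two-element sequence $\{x_1,x_i\}$ over $K$ --- reducing everything to a uniqueness-of-expansion argument in the single variable $x_1$ over $K(x_1^{q_2})$. Some structural device of this kind is needed; a single induction on $i$ followed by coefficient comparison is not enough.
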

By the above proposition, for each $x_i$ in a Pickert generating sequence, there is a corresponding $g_i\in K(x_1^{q_i},\ldots,x_{i-1}^{q_i})$ such that $x_i^{q_i}=g_i$.  These polynomials in the $x_i$ will be called the {\bfseries structure equations} for $L/K$ corresponding to the Pickert generating sequence $\{x_1,\ldots,x_n\}$, and 

$$L\cong K[x_1,\ldots,x_n]\big/(x_1^{q_1}-g_1,\,x_2^{q_2}-g_2,\ldots,x_n^{q_n}-g_n)$$ 

\noindent as $K$-algebras.  For each $i$, the definition of $e_i$ guarantees that $q_i$ is the minimal power of $p$ for which the structure equations have the property described in Proposition \ref{prop:structurequations}.\\
\indent  Pickert generating sequences provide another criterion for determining modularity:

\begin{proposition}\label{prop:pickertmodular}\cite[\S5, Theorem 4]{Rasala}
Let $K$ be a field of characteristic $p>0$ and suppose $L/K$ is a finite purely inseparable field extension with Pickert generating sequence $\{x_1,\ldots,x_n\}$ and corresponding exponent sequence $e_1\ge\cdots\ge e_n$.  $L/K$ is modular if and only if for all $i$, $e_i=\exp[x_i:K]$.
\end{proposition}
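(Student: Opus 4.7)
The plan is to prove both implications separately, with the easier direction by a dimension count and the harder direction by invoking Sweedler's characterization of modularity in terms of linear disjointness of $L^{p^{e_i}}$ and $K$.

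For ($\Leftarrow$), suppose $\exp[x_i:K] = e_i$ for all $i$. Then $x_i^{p^{e_i}} \in K$, so each simple extension $K(x_i) \cong K[T]/(T^{p^{e_i}} - x_i^{p^{e_i}})$ has $K$-dimension $p^{e_i}$. I would form the natural $K$-algebra homomorphism $\varphi \colon K(x_1) \otimes_K \cdots \otimes_K K(x_n) \to L$, which is surjective because the $p$-basis $\{x_1,\ldots,x_n\}$ generates $L$ as a $K$-algebra. Both source and target are $K$-vector spaces of dimension $\prod_i p^{e_i}$: the source by multiplicativity, the target by the explicit $K$-basis $\{x_1^{r_1}\cdots x_n^{r_n} : 0 \le r_j < p^{e_j}\}$ furnished by the Pickert generating sequence (as noted immediately after Definition~\ref{def:Pickertgeneratingsequence}). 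A surjection between finite-dimensional $K$-vector spaces of equal dimension is an isomorphism, so $L \cong \bigotimes_{i=1}^n K(x_i)$, which is the definition of modularity.

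For ($\Rightarrow$), assume $L/K$ is modular. The inequality $\exp[x_i:K] \ge e_i$ is automatic, so it suffices to show $x_i^{p^{e_i}} \in K$ for each $i$. By Proposition~\ref{prop:structurequations}, the structure equation gives $x_i^{p^{e_i}} \in K(x_1^{p^{e_i}}, \ldots, x_{i-1}^{p^{e_i}}) \subseteq K \cdot L^{p^{e_i}}$. Theorem~\ref{Sweedlertheorem}, in the equivalence $(\ref{Sweedler1}) \Leftrightarrow (\ref{Sweedler3})$, tells us that modularity means $K$ and $L^{p^{e_i}}$ are linearly disjoint over $K \cap L^{p^{e_i}}$, hence $K \cdot L^{p^{e_i}} \cong K \otimes_{K \cap L^{p^{e_i}}} L^{p^{e_i}}$. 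Since $x_i^{p^{e_i}} \in L^{p^{e_i}}$, it corresponds to $1 \otimes x_i^{p^{e_i}}$ in the tensor product; on the other hand, the structure equation gives a competing expression $\sum_J c_J \prod_{j<i} x_j^{r_j p^{e_i}}$ with $c_J \in K$. Expanding each $c_J$ against a $(K \cap L^{p^{e_i}})$-basis of $K$ and equating the two tensor expansions component-by-component forces the $c_J$ themselves to lie in $K \cap L^{p^{e_i}}$, yielding $x_i^{p^{e_i}} \in K$.

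The hard part will be this final comparison in ($\Rightarrow$): converting the structure equation with refined coefficients in $K \cap L^{p^{e_i}}$ into the conclusion that $x_i^{p^{e_i}}$ is itself in $K$, rather than just that it has a better representation inside $K(x_1^{p^{e_i}},\ldots,x_{i-1}^{p^{e_i}})$. I expect to use the minimality of $e_i$ built into Proposition~\ref{prop:structurequations} together with the $K$-linear independence of the Pickert monomials, possibly pulling back through the inverse Frobenius $L^{p^{e_i}} \xrightarrow{\sim} L$ to rewrite the refined relation as one among the $x_j$'s and obtain a contradiction with $\{x_1,\ldots,x_i\}$ being part of a $p$-basis unless the only surviving term is the constant one, i.e., $x_i^{p^{e_i}} \in K$.
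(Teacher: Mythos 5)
Your $(\Leftarrow)$ direction is correct and is the standard argument: under the hypothesis $\exp[x_i:K]=e_i$ each $K(x_i)$ has $K$-dimension $p^{e_i}$, the multiplication map $K(x_1)\otimes_K\cdots\otimes_K K(x_n)\to L$ is surjective because the $x_i$ generate, and source and target both have dimension $\prod_i p^{e_i}$ (the target by the Pickert monomial basis), so the map is an isomorphism. Note that this is the only direction the paper actually invokes: the appeal to Proposition~\ref{prop:pickertmodular} inside the proof of Theorem~\ref{thm:modulardiffops} is precisely the contrapositive of this implication.

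The $(\Rightarrow)$ direction has a genuine gap at exactly the step you flag as ``the hard part,'' and it cannot be closed, because for an \emph{arbitrary} Pickert generating sequence the implication is false. Take $K=\mathbb{F}_p(a,b)$ with $a,b$ indeterminates and $L=K(u,v)$ where $u^{p^2}=a$ and $v^{p}=b$; then $L\cong K(u)\otimes_K K(v)$ is modular, and $\{x_1,x_2\}=\{u,\,v+u\}$ is a Pickert generating sequence with $e_1=2$ and $e_2=1$ (since $(v+u)^p=b+u^p\in K(u)$), yet $\exp[v+u:K]=2\ne e_2$. This example satisfies every intermediate conclusion of your argument and shows why the endgame fails: the structure equation is $x_2^{p}=b+x_1^{p}$, its coefficients $b=v^{p}$ and $1$ do lie in $K\cap L^{p}$ exactly as your linear-disjointness step predicts, and pulling back through Frobenius yields $x_2=v+x_1$ with $v^{p}\in K$ --- a relation perfectly compatible with $\{x_1,x_2\}$ being a $p$-basis, giving no contradiction and no conclusion that $x_2^{p}\in K$. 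In general your refinement only produces $x_i=\sum_J\gamma_J\,x_1^{j_1}\cdots x_{i-1}^{j_{i-1}}$ with $\gamma_J\in L$ and $\gamma_J^{q_i}\in K$, which expresses $x_i$ as an $L$-linear combination of monomials (the constant monomial already spans $L$ over $L$) and says nothing about $x_i^{q_i}$ lying in $K$. The true statement, and the one Rasala proves, is existential: $L/K$ is modular if and only if \emph{some} Pickert generating sequence satisfies $e_i=\exp[x_i:K]$ for all $i$; its forward direction is immediate by ordering the tensor factors of a modular decomposition by decreasing exponent, with no linear-disjointness argument needed. As written, the proposition's forward direction should either be read existentially or be restricted to a suitably chosen generating sequence.
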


Modularity can be determined from the exponent sequence by Proposition \ref{prop:pickertmodular}, but this determination will still be non-intrinsic since a choice of $p$-basis is required in the proposition.  The structure equations of an extension, however, can be related to behavior of the differential operators, allowing us to construct another test for modularity.


\section{Differential Operators on Modular Field Extensions}
The filtration of the differential operators by their orders provides the necessary information to determine which subrings of $\eend_KL$ correspond to modular extensions by the Jacobson-Bourbaki Theorem.  To prove the main theorem, the following technical lemma is required.

\begin{lemma}\label{lem:diffopextension}
Let $K$ be a field of characteristic $p>0$ and suppose $L/K$ is a finite purely inseparable extension of $K$.  Let $\{x_1,\,x_2,\ldots,x_n\}$ be a Pickert generating sequence for $L/K$ with corresponding exponent sequence $e_1\ge e_2\ge\cdots \ge e_n$ and let $D$ be a differential operator of order $N$ in $\diff_KK(x_1,\ldots,x_i)$ for some $i<n$.  Suppose $\widetilde{D}\in \diff_KK(x_1,\ldots,x_{i+1})$ is the unique extension of $D$ such that $\left.\widetilde{D}\right|_{K(x_1,\ldots,x_i)}=D$ and $\widetilde{D}(x_{i+1}^j)=0$ for all $0\le j<p^{e_{i+1}}$.  Then $\widetilde{D}$ is a differential operator of order $N$.
\end{lemma}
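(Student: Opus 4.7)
I will construct $\widetilde{D}$ explicitly and then argue by induction on $N$ that it lies in $\diff^{\le N}_K E$. Write $F = K(x_1, \ldots, x_i)$, $E = F(x_{i+1})$, and $q := p^{e_{i+1}}$. By Proposition~\ref{prop:structurequations}, $x_{i+1}^q = g_{i+1}$ with $g_{i+1} \in K(x_1^q, \ldots, x_i^q) \subseteq F$, so $E$ has $F$-basis $\{x_{i+1}^j\}_{0 \le j < q}$ and every $z \in E$ admits a unique expansion $z = \sum_{j=0}^{q-1} f_j(z) x_{i+1}^j$. The hypotheses on $\widetilde{D}$ force $D(1) = 0$ and pin $\widetilde{D}$ down as the coefficient-wise extension $\widetilde{D}(z) := \sum_j D(f_j(z)) x_{i+1}^j$; its uniqueness is immediate from uniqueness of the $F$-basis expansion.

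I intend to prove the slightly stronger statement, by strong induction on $N$, that the coefficient-wise extension of \emph{any} $D' \in \diff^{\le N}_K F$ (dropping the centering assumption $D'(1) = 0$) lies in $\diff^{\le N}_K E$. By the commutator criterion together with $K$-linearity and the Leibniz identity $[\widetilde{D'}, bc] = b[\widetilde{D'}, c] + [\widetilde{D'}, b]c$, it will suffice to verify $[\widetilde{D'}, x_k] \in \diff^{\le N-1}_K E$ for each generator $k \in \{1, \ldots, i+1\}$. For $k \le i$, a direct $F$-basis computation identifies $[\widetilde{D'}, x_k]$ with the coefficient-wise extension of $[D', x_k] \in \diff^{\le N-1}_K F$, and the induction hypothesis supplies the required bound.

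The delicate case $k = i+1$ uses the structure equation: one computes directly that
\[
[\widetilde{D'}, x_{i+1}](z) = [D', g_{i+1}]\bigl(f_{q-1}(z)\bigr) = \widetilde{[D', g_{i+1}]} \circ \partial^{[q-1]}_{x_{i+1}}(z),
\]
where $\partial^{[q-1]}_{x_{i+1}}$ is the divided-power derivation on $E$, a DO of order $\le q-1$ (well-defined on $E$ because $\binom{q}{k} \equiv 0 \pmod p$ for $0 < k < q$). The crux is the sharper estimate $[D', g_{i+1}] \in \diff^{\le N-q}_K F$: since $g_{i+1}$ is a polynomial in $x_1^q, \ldots, x_i^q$ with coefficients in $K$, Leibniz reduces this to the bound for each $[D', x_j^q]$, which in turn follows from the identity $[D', \beta^{p^k}] = [D', \beta]^{(p^k)}$ (the $p^k$-fold iterated commutator, of order $\le N - p^k$), obtained by collapsing $D'(\beta^{p^k} f) = \sum_l \binom{p^k}{l} \beta^{p^k-l}[D', \beta]^{(l)}(f)$ via $\binom{p^k}{l} \equiv 0 \pmod p$ for $0 < l < p^k$. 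Applying the induction hypothesis to $[D', g_{i+1}]$ (of order $\le N - q < N$) then yields $\widetilde{[D', g_{i+1}]} \in \diff^{\le N-q}_K E$, and composition with $\partial^{[q-1]}_{x_{i+1}}$ gives $[\widetilde{D'}, x_{i+1}] \in \diff^{\le N-1}_K E$, closing the induction. The main obstacle will be identifying and justifying this sharper order bound on $[D', g_{i+1}]$; this is where the Pickert structure equation $g_{i+1} \in K(x_1^q, \ldots, x_i^q)$ enters essentially, together with characteristic-$p$ binomial vanishing.
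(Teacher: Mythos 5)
Your proposal is correct, and it takes a genuinely different route from the paper's proof. The paper argues top-down: supposing $\widetilde{D}$ has order $M$, it picks a length-$M$ iterated commutator equal to a nonzero element of the field, uses the derivation property of $[\,\cdot\,,ab]$ to arrange that every argument is either $x_{i+1}$ or an element of $K(x_1,\ldots,x_i)$, and then invokes two external results --- Gerstenhaber's lemma that the number $J$ of occurrences of $x_{i+1}$ is divisible by $q_{i+1}$, and Nakai's formula to replace each block of $q_{i+1}$ commutators with $x_{i+1}$ by a single commutator with the structure polynomial --- before counting order drops to force $M\le N$. You instead run a self-contained induction on $N$ via the commutator characterization of order reduced to the algebra generators $x_1,\ldots,x_{i+1}$; the only delicate case, $[\widetilde{D'},x_{i+1}]$, is computed explicitly as $\widetilde{[D',g_{i+1}]}\circ\partial^{[q-1]}_{x_{i+1}}$, and the crucial drop of $q$ in order comes from the identity $[D',\beta^{p^k}]=\mathrm{ad}_\beta^{p^k}(D')$ in characteristic $p$. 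Both proofs ultimately rest on the same structural input --- the Pickert structure equation lies in $K(x_1^q,\ldots,x_i^q)$, so commutating with it costs at least $q$ in order --- but your version replaces the citations of Gerstenhaber and Nakai with elementary binomial-coefficient arguments, and it supplies the justification the paper elides when it asserts that $[D,f_{i+1}]$ has order at most $N-q_{i+1}$ merely because $f_{i+1}$ has degree at least $q_{i+1}$ (degree alone would not suffice; one needs exactly your observation that every exponent is a multiple of $q$). A further merit is that by proving the statement for the coefficient-wise extension of an arbitrary $D'$, without assuming $D'(1)=0$, you make $\widetilde{D}$ genuinely well defined on all of $K(x_1,\ldots,x_{i+1})$, a point the lemma's statement leaves slightly ambiguous.
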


\begin{proof}
Set $q_i=p^{e_i}$ and let $f_{i+1}$ be the structure equation for $x_{i+1}$ with respect to the Pickert generating sequence $\{x_1,\ldots,x_n\}$.  Thus $x_{i+1}^{q_{i+1}}=f_{i+1}(x_1,\ldots,x_i)$ where, by Proposition \ref{prop:structurequations}, the degree of each $x_j$ in $f_{i+1}$ is a multiple of $q_{i+1}$.  Note that $\displaystyle K(x_1,\ldots, x_{i+1})=\bigoplus_{j=0}^{q_{i+1}-1}K(x_1,\ldots,x_i)x_{i+1}^j$.  Therefore, the map $\widetilde{D}$ as defined in the statement of the lemma is a well-defined endomorphism of $K(x_1,\ldots,x_{i+1})$ over $K$ and is unique by the direct sum decomposition of $K(x_1,\ldots, x_{i+1})$.  Since $K(x_1,\ldots,x_{i+1})/K$ is a finite purely inseparable extension, every endomorphism is a differential operator by Proposition \ref{prop:enddiff}.  Hence, $\widetilde{D}\in\diff_KK(x_1,\ldots,x_{i+1})$.\\
\indent The order of $\widetilde{D}$ is greater than or equal to the order of $D$ as differential operators in $\diff_KK(x_1,\ldots,x_i)$ and $\diff_KK(x_1,\ldots,x_{i+1})$, respectively.  Thus, it remains to show that the order of $\widetilde{D}$ is $N$.  Suppose $M$ is the order of $\widetilde{D}$.  Then there exist $a_1,\ldots, a_M$ such that 
\begin{equation}\label{eq:commutator1}
\left[\left[\cdots\left[\widetilde{D},\,a_1\right],\,a_2\right],\ldots,a_M\right]=z\in K(x_1,\ldots,x_{i+1})
\end{equation}
\noindent where $z\ne 0$.  This expression is symmetric in the $a_i$ and is a derivation in each commutator.  That is, for any $a,\,b\in K(x_1,\ldots,x_{i+1})$

\begin{eqnarray*}
\left[\left[\cdots\left[\widetilde{D},\,a_1\right],\,a_2\right],\ldots,ab\right]&=&a\left[\left[\cdots\left[\widetilde{D},\,a_1\right],\,a_2\right],\ldots,b\right]\\
&+&b\left[\left[\cdots\left[\widetilde{D},\,a_1\right],\,a_2\right],\ldots,a\right].
\end{eqnarray*}

Hence, Equation \ref{eq:commutator1} can be decomposed as a sum of commutators such that one of the summands is nonzero.  In particular, there exists a nonnegative integer $J$ and $g_1\ldots g_{M-J}\in K(x_1,x_2,\ldots,x_i)$ such that 

\begin{equation}\label{eq:commutatorunsimplified}
\left[\left[\left[\cdots\left[\left[\left[\left[\widetilde{D},\,x_{i+1}\right],\,x_{i+1}\right],\ldots,x_{i+1}\right],g_1\right],g_2\right]\ldots,\right],g_{M-J}\right]\ne 0
\end{equation}

\noindent where the number of $x_{i+1}$'s in this expression is $J$.  By Gerstenhaber \cite[Lemma 5.1]{GerstFundForm}, $q_{i+1}$ divides $J$.  If $J=cq_{i+1}$ where $c$ is a nonnegative integer, then using a formula proven by Nakai \cite[Corollary 11.2]{Nakai}, Equation \ref{eq:commutatorunsimplified} simplifies to

\begin{equation}\label{eq:commutator}
\left[\left[\left[\cdots\left[\left[\left[\left[\widetilde{D},\,f_{i+1}\right],\,f_{i+1}\right],\ldots,f_{i+1}\right],g_1\right],g_2\right]\ldots,\right],g_{M-J}\right]\in K(x_1,\ldots,x_{i+1})^\times
\end{equation}
where the number of $f_{i+1}$'s in the expression is $c$.  This equation is the commutator of $\widetilde{D}$ with $M-J+c$ elements of $K(x_1,\ldots,x_i)$.  Hence, Equation \ref{eq:commutator} equals

\begin{equation}\label{eq:commutatoroverK}
\left[\left[\left[\cdots\left[\left[\left[\left[D,\,f_{i+1}\right],\,f_{i+1}\right],\ldots,f_{i+1}\right],g_1\right],g_2\right]\ldots,\right],g_{M-J}\right]
\end{equation}

Each $f_{i+1}$ is a polynomial of degree at least $q_{i+1}$, hence the order of the differential operator $[D,\,f_{i+1}]$ is at most $N-q_{i+1}$.  Therefore the order of the differential operator in Equation \ref{eq:commutatoroverK} is at most $N-cq_{i+1}-(M-J)=N-M$.  This iterative commutator is a differential operator of order $0$ by its original construction, hence $N=M$ and the order of $\widetilde{D}$ equals the order of $D$.

\end{proof}

\begin{theorem}\label{thm:modulardiffops}
Let $L/K$ be a finite purely inseparable extension with exponent $e$ and let $\mbox{char }K=p>0$.  $L/K$ is modular if and only if for all $0<i\le e-1$, the multiplication homomorphism $L\otimes_{L^{p^i}}\mathcal{A}_{L/K,\,i}\to\diff^{\,p^i}_KL$ is a surjection.  That is, for each $i$, $\mathcal{A}_{L/K,\,i}$ spans $\diff^{\,p^i}_KL$ as an $L$ subspace.
\end{theorem}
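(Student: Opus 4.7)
My plan is to prove the two implications separately. The forward direction is a direct construction using a Pickert generating sequence; the reverse direction will require the more delicate use of Lemma \ref{lem:diffopextension} combined with a symbol-level argument.

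\textbf{Forward direction.} Assume $L/K$ is modular. By Proposition \ref{prop:pickertmodular} I can pick a Pickert generating sequence $\{x_1,\ldots,x_n\}$ with $e_l=\exp[x_l:K]$, so $x_l^{q_l}\in K$ for $q_l=p^{e_l}$ and $L\cong K(x_1)\otimes_K\cdots\otimes_K K(x_n)$. For each multi-index $(c_1,\ldots,c_n)$ with $c_l<q_l$ I define the divided-power operator $D^{(c)}=\prod_l\partial_{x_l}^{[c_l]}$, acting on the basis monomial $x_1^{a_1}\cdots x_n^{a_n}$ by $\prod_l\binom{a_l}{c_l}\,x_l^{a_l-c_l}$ and extended $K$-linearly. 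The tensor decomposition makes $D^{(c)}$ a differential operator of order $\sum_l c_l$, and the full collection forms an $L$-basis of $\diff_KL$. The key calculation is that each $D^{(c)}$ with $\sum_l c_l\le p^i$ lies in $\mathcal{A}_{L/K,i}$: on a monomial $x_1^{p^jb_1}\cdots x_n^{p^jb_n}\in L^{p^j}$ for $j\le i$, the output is $\prod_l\binom{p^jb_l}{c_l}\,x_l^{p^jb_l-c_l}$, and Lucas' theorem mod $p$ forces $\binom{p^jb_l}{c_l}\equiv 0$ whenever $c_l$ has a nonzero base-$p$ digit below position $j$; otherwise $p^j\mid c_l$, so $\binom{p^jb_l}{c_l}\equiv\binom{b_l}{c_l/p^j}$ and the surviving exponent $p^jb_l-c_l$ stays a multiple of $p^j$, keeping the output in $L^{p^j}$. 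Since the $D^{(c)}$ with $\sum_l c_l\le p^i$ already form an $L$-basis of $\diff^{p^i}_KL$, the $L$-span of $\mathcal{A}_{L/K,i}$ exhausts $\diff^{p^i}_KL$.

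\textbf{Reverse direction.} I would argue the contrapositive: assume $L/K$ is not modular and exhibit some $i$ in $0<i\le e-1$ together with an operator in $\diff^{p^i}_KL$ outside $L\cdot\mathcal{A}_{L/K,i}$. Fix a Pickert generating sequence; by Proposition \ref{prop:pickertmodular} non-modularity forces a smallest index $j$ whose structure equation $x_j^{q_j}=g_j$ has $g_j\notin K$. Iterating Lemma \ref{lem:diffopextension} along the chain $K\subset K(x_1)\subset\cdots\subset L$, I would build a differential operator $D\in\diff^{p^{e_j-1}}_KL$ of order exactly $p^{e_j-1}$ whose action involves $x_j^{q_j}=g_j$ in a controlled way; the lemma is essential here, because without it the successive normal extensions could inflate the order past $p^{e_j-1}$ and break the filtration bookkeeping.

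The hardest step will be showing that this $D$ is not in $L\cdot\mathcal{A}_{L/K,e_j-1}$. I plan to work in the associated graded $\gr^\bullet\diff_KL$: from the indecomposable divided-powers discussion in Section \ref{ch:Jacobson}, the top-order symbols of elements of $\mathcal{A}_{L/K,i}$ land inside the image of $L^{p^i}\otimes_L\der_KL$ in $\overline{\Gamma}^{[p^i]}(\der_KL)$, and $L$-scaling preserves this image. The non-triviality of $g_j$ should force the symbol of $D$ to escape that image. Packaging this obstruction via Sweedler's linear-disjointness criterion (Theorem \ref{Sweedlertheorem}(\ref{Sweedler3})) looks like the cleanest route, since non-modularity is equivalent to some $L^{p^i}$ failing to be linearly disjoint from $K$, which is morally exactly what the symbol obstruction encodes.
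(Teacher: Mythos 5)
Your forward direction is correct and is essentially the paper's argument: decompose $\diff_KL$ via the tensor factorization, take the divided-power monomial basis, and check membership in $\mathcal{A}_{L/K,\,i}$; your Lucas-theorem verification of that membership is in fact more explicit than the paper's.

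The reverse direction, however, has a genuine gap: the witness operator is never actually constructed, and the order you propose for it appears to be wrong. In the paper the obstruction is produced as follows. Let $z=x_j$ be the first element of the Pickert generating sequence whose structure equation $z^q=f(x_1^q,\ldots,x_{j-1}^q)$, $q=p^{e_j}$, has a coefficient outside $L^q$. One first shows $f$ must have at least \emph{two} monomials whose coefficients are not $q$th powers (otherwise one could solve the structure equation for the single offending coefficient and exhibit it as a $q$th power in $L$). Among these one chooses a variable, say $x_1$, occurring with two distinct exponents, lets $qb_1$ be the largest such exponent and $Q$ the largest power of $p$ with $Q\le qb_1$, and takes the normal extension $\widetilde{D}$ of $\left(\frac{d}{dx_1}\right)^{[Q]}$, of order exactly $Q$ by Lemma \ref{lem:diffopextension}. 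The whole point of this choice is that $\binom{qb_1}{Q}\ne 0$ in characteristic $p$ and at least one surviving coefficient is not a $q$th power, so $\widetilde{D}(z^q)\notin L^q$ while $\widetilde{D}(x_1^Q)=1\in L^Q$. None of this is in your sketch: ``whose action involves $x_j^{q_j}=g_j$ in a controlled way'' and ``should force the symbol of $D$ to escape that image'' are statements of intent, not arguments, and the two-coefficient observation and the choice of $Q$ are precisely the nontrivial combinatorial content of this half of the theorem.

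Concretely, the order $p^{e_j-1}$ you propose cannot work: the right-hand side $g_j$ is a $K$-linear combination of monomials in $x_1^{q_j},\ldots,x_{j-1}^{q_j}$, so by the same Lucas-theorem computation you used in the forward direction, every divided-power operator of order less than $q_j=p^{e_j}$ (other than its order-zero part) annihilates $g_j$ and therefore sees no obstruction; the paper's witness necessarily has order $Q\ge q_j$. Finally, the proposed detour through $\overline{\Gamma}^{[p^i]}\left(\der_KL\right)$ and Sweedler's linear-disjointness criterion is left entirely unexecuted; the paper instead concludes directly from the three facts that $\widetilde{D}(x_1^Q)\in L^Q$, that $\widetilde{D}(z^q)\notin L^q$, and that $\widetilde{D}$ is not a sum of products of lower-order operators. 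If you want to salvage the symbol-level route you would still need to supply the explicit operator and the nonvanishing binomial coefficient, at which point the graded machinery buys you nothing.
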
 

\begin{proof}
Let $\mathcal{A}_{L/K,\,i}$ be denoted $\mathcal{A}_i$ in this proof.  Suppose $L/K$ is a modular extension.  By definition there exist $\{x_1,\ldots x_n\}\subset L$ such that $\displaystyle L\cong\bigotimes_{i=1}^n K(x_i)$.  Then 
\begin{equation}\label{eq:tensordiffop}
\displaystyle \diff_KL\cong \bigotimes_{i=1}^n \diff_KK(x_i).
\end{equation}  Now, if $l_i=\exp\left[x_i:K\right]$, then $\diff_KK(x_i)$ is generated as a $K(x_i)$-algebra by $\displaystyle \left\{\left(\frac{d}{dx_i}\right)^{[p^k]}\right\}_{0\le k<l_i}$.  Hence, by the isomorphism in Equation \ref{eq:tensordiffop}, any $D\in\diff_KL$ is a linear combination over $L$ of differential operators of the form $$D_{a_1,\ldots,a_n}\coloneqq\left(\frac{d}{dx_1}\right)^{[a_1]}\cdots\left(\frac{d}{dx_n}\right)^{[a_n]}$$ where $0\le a_j<l_j$.  $D_{a_1,\ldots,a_n}\in \mathcal{A}_i$ if and only if $a_1+\cdots+a_n\le p^i$, so for some integer $j$ $D_{a_1,\ldots,a_n}$ is an element of $\mathcal{A}_j$.  Thus every $D\in \diff^{\,p^i}_KL$ is the $L$-linear combination of elements in $\mathcal{A}_i$.  Hence $\mathcal{A}_i$ spans $\diff^{\,p^i}_KL$.\\
\indent
Now suppose $L/K$ is not a modular extension.  We will find elements $a,\,b\in L$ with $a\in L^{p^i}$ and $b\in L^{p^j}$ and a differential operator $D\in \diff^{\,p^l}_KL$, $i,\,j\le l$ that satisfy the following properties: $D$ is not the sum of products of differential operators of lower order, $D(a)\in L^{p^i}$, and $D(b)\notin L^{p^j}$.  This operator $D$ will then not be spanned by $\mathcal{A}_i$.

 Let $\{x_1,\ldots, x_n\}$ be a Pickert generating sequence for $L/K$ with $e_1\ge e_2\ge\cdots\ge e_n$ the intrinsic non-increasing sequence of exponents.  Since $L/K$ is not modular, by Proposition \ref{prop:pickertmodular} there exists an integer $i$ satisfying 
 $$\exp\left[x_i:K(x_1,\ldots,x_{i-1}\right]\ne\exp\left[x_i:K\right],$$ 
 and $x_i$ has a structure equation with coefficients which are not all in $L^{p^{e_i}}$.

Let $z\coloneqq x_i$ be the first such element in the Pickert Generating sequence to exhibit this property.  Set $q=p^{e_i}$ and let $z^q=f(x_1^q,\ldots,x_{i-1}^q)$ be the structure equation of $z$.  Since
$z^q\notin K$, $f$ is neither constant nor does the degree of every $x_j^q$ in the polynomial exceed $p^{e/e_i}$.\\
\indent The polynomial $f$ must have at least two terms with coefficients which are not $q$th powers in $L$.  That is, suppose $\alpha x_1^{qa_1}\ldots x_{i-1}^{qa_{i-1}}$ is the only summand of $f$ with $\alpha\in K\setminus L^q$.  Then solving the structure equation of $z$ for $\alpha$, we get
$$\alpha=\frac{z^q-f(x_1^q,\ldots x_{i-1}^q)+\alpha x_1^{qa_1}\ldots x_{i-1}^{qa_{i-1}}}{x_1^{qa_1}\ldots x_{i-1}^{qa_{i-1}}}.$$
The right-hand side is a $q$th power in $L$, which contradicts $\alpha\notin L^q$.  
 
\indent Thus, at least two of the coefficients of terms of $f$ are not in $L^q$.  Let $\mathcal{C}$ denote the set of all such monomials of $f$.  $\mathcal{C}$ has at least two elements, so $f$ must have $K$-independent summands $ax_1^{qa_1}\cdots x_{i-1}^{qa_{i-1}}$ and $bx_1^{qb_1}\cdots x_{i-1}^{qb_{i-1}}$ where $a$ and $b$ are not in $L^q$ and $a_j,\,b_j<p^{e_j-e_i}$ for all $1\le j\le i-1$.  At least one of the $x_j$ must have different degrees in these summands or else they are not linearly independent.  Suppose without loss of generality that $x_1$ is the element with $a_1\ne b_1$ and that $b_1$ is the largest such exponent of $x_1^q$ for such an element of $\mathcal{C}$.  Define $Q$ as the largest power of $p$ which is $\le qb_1$.  Write
$$f(x_1^q\ldots,x_{i-1}^q)=\sum_{0\le j_k<p^{e_k-e_i}}\alpha_{j_1,\ldots,j_{i-1}}x_1^{qj_1}\cdots x_{i-1}^{qj_{i-1}}$$

Now, $\displaystyle \left(\frac{d}{dx_1}\right)^{[Q]}$ 
is a differential operator of
$\displaystyle K(x_1,\ldots, x_{i-1})=\bigotimes_{j=1}^{i-1}K(x_j)$ over 
$K$ of order $Q$.  By Lemma \ref{lem:diffopextension}, we can extend $\displaystyle \left(\frac{d}{dx_1}\right)^{[Q]}$ to a differential operator $D\in\diff_KK(x_1,\ldots, x_i)$ of order $Q$.  By induction on $i$ we can extend $D$ in this manner to a differential operator on $L$.  Call $\widetilde{D}$ the extension of $\displaystyle\left(\frac{d}{dx_1}\right)^{[Q]}$ to $L$.  Gerstenhaber \cite{GerstFundForm} calls $\widetilde{D}$ the \emph{normal extension of $\displaystyle\left(\frac{d}{dx_1}\right)^{[Q]}$}, which is a differential operator of order $Q$ by Lemma \ref{lem:diffopextension}.

So, 
\begin{eqnarray*}
\widetilde{D}(z^q)&=&D\left(f(x_1^q,\ldots,x_{i-1}^q)\right)\\&=&
\left(\frac{d}{dx_1}\right)^{[Q]}\left(\sum_{0\le j_k<p^{e_k-e_i}}\alpha_{j_1,\ldots,j_{i-1}}x_1^{qj_1}\cdots x_{i-1}^{qj_{i-1}}\right)\\
&=& \sum_{0\le j_k<p^{e_k-e_i}} \alpha_{j_1,\ldots,j_{i-1}}\binom{qj_1}{Q}(x_1)^{qj_1-Q}(x_2^q)^{j_2}\cdots(x_{i-1}^q)^{j_{i-1}}
\end{eqnarray*}

This last sum is not zero because, by the choice of $x_1$, $f$ has at least one nonzero term with the degree of $x_1^q$ greater than $0$.  Additionally, by the choice of $Q$, $\displaystyle \binom{qj_1}{Q}\ne 0$ for some $j_1$ \cite[p.577]{Alperin}.  Likewise, the sum is not a $q$th power in $L$, because at least one of the coefficients $\alpha_{j_1,\ldots,j_{i-1}}$ is neither a $q$th power in $L$ nor the coefficient of a term which vanishes by $\widetilde{D}$.  Therefore, $\widetilde{D}(z^q)\notin L^q$ but $\widetilde{D}(x_1^Q)=1\in L^Q$.  Since $\displaystyle \left(\frac{d}{dx_1}\right)^{[Q]}$ is not the product of differential operators of lower order in $\diff_KK(x_1,\ldots, x_{i-1})$, $\widetilde{D}$ is not the product of differential operators of lower order either.  Therefore, $\widetilde{D}$ is not in the $L$-span of $\mathcal{A}_i$ for any $i$ and the theorem is proven.
\end{proof}



An immediate corollary of the theorem is

\begin{corollary}\label{cor:biggestmodularextension}
Let $L/K$ be as in the theorem.  Let $\mathcal{D}$ be the $L$-subalgebra of $\diff_KL$ generated by the $\mathcal{A}_i$.  Then $\mathcal{D}$ is the largest subalgebra of $\diff_KL$ such that $L/L^{\mathcal{D}}$ is a modular extension.
\end{corollary}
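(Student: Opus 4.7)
The plan is to combine the Jacobson-Bourbaki correspondence with Theorem \ref{thm:modulardiffops}, using the Galois-type bijection between $L$-subalgebras of $\diff_KL = \eend_KL$ and intermediate subfields of $L/K$ supplied by Proposition \ref{prop:enddiff}. Setting $F := L^\mathcal{D}$, Jacobson-Bourbaki gives $\mathcal{D} = \diff_FL$, so the corollary is equivalent to the assertion that $F$ coincides with the smallest intermediate subfield $F_0$ for which $L/F_0$ is modular (its existence being recorded in the paragraph after Theorem \ref{Sweedlertheorem}).

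For the inclusion $F \subseteq F_0$ I would first establish the maximality assertion: if $\mathcal{S} \subseteq \diff_KL$ is any $L$-subalgebra with $L/L^\mathcal{S}$ modular and $K' := L^\mathcal{S}$, then $\mathcal{S} = \diff_{K'}L$, and Theorem \ref{thm:modulardiffops} applied to $L/K'$ shows that each $\diff^{\,p^i}_{K'}L$ is the $L$-span of $\mathcal{A}_{L/K',i}$, with these spaces exhausting $\mathcal{S}$.  Because $K \subseteq K'$, every $K'$-linear operator is $K$-linear, so $\mathcal{A}_{L/K',i} \subseteq \mathcal{A}_{L/K,i} \subseteq \mathcal{D}$, forcing $\mathcal{S} \subseteq \mathcal{D}$ and, by Jacobson-Bourbaki, $K' \supseteq F$.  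Specializing to $\mathcal{S} = \diff_{F_0}L$ yields $F \subseteq F_0$.

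For the reverse inclusion $F_0 \subseteq F$ it suffices to verify $L/F$ is modular, whereupon the minimality of $F_0$ closes the loop.  Since $\mathcal{A}_{L/K,i} \subseteq \mathcal{D} = \diff_FL$ makes every element of $\mathcal{A}_{L/K,i}$ automatically $F$-linear, one checks $\mathcal{A}_{L/F,i} = \mathcal{A}_{L/K,i}$.  By Theorem \ref{thm:modulardiffops} applied to $L/F$, the modularity of $L/F$ then reduces to the equality $L \cdot \mathcal{A}_{L/K,i} = \mathcal{D} \cap \diff^{\,p^i}_KL$ for each $0 < i \le \exp[L:F] - 1$, of which the inclusion $\subseteq$ is immediate from the definitions.

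The main obstacle is the reverse inclusion $\mathcal{D} \cap \diff^{\,p^i}_KL \subseteq L \cdot \mathcal{A}_{L/K,i}$: a generic element of $\mathcal{D}$ is presented as an $L$-combination of products of elements drawn from several $\mathcal{A}_{L/K,j}$, and one must show that products involving factors from $\mathcal{A}_{L/K,j}$ with $j > i$ contribute nothing new at order $\le p^i$.  I would attack this by passing to the associated graded symbol algebra $\gr^\bullet \diff_KL \cong \Gamma^*(\der_KL)$, in which $\mathcal{A}_{L/K,i}$ maps onto the indecomposable divided power space $\overline{\Gamma}^{[p^i]}(\der_KL)$ described before Theorem \ref{thm:modulardiffops}.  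Induction on the order of the operator, combined with the divided-power product identities that govern how products of lower indecomposable generators decompose in degree $p^i$, should represent any order-$p^i$ symbol inside the $L$-span of $\overline{\Gamma}^{[p^i]}(\der_KL)$, and a standard filtration argument then lifts the equality back to $\diff_KL$, completing the verification that $L/F$ is modular and hence that $\mathcal{D} = \diff_{F_0}L$.
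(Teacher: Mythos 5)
The paper offers no proof of this corollary --- it is stated as an ``immediate'' consequence of Theorem \ref{thm:modulardiffops} --- so there is nothing to compare your argument against directly.  Your overall framing via Jacobson--Bourbaki is the right one, and your maximality half is essentially complete and correct: for any $L$-subalgebra $\mathcal{S}=\diff_{K'}L$ with $L/K'$ modular, Theorem \ref{thm:modulardiffops} applied to $L/K'$ shows $\mathcal{S}$ is generated by the $\mathcal{A}_{L/K',i}$, each of which sits inside $\mathcal{A}_{L/K,i}\subseteq\mathcal{D}$, so $\mathcal{S}\subseteq\mathcal{D}$.  (You should still record why $\diff_{K'}L$ is generated by its operators of order $\le p^{\exp[L:K']-1}$; this follows from the tensor decomposition in the proof of the theorem.)

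The gap is in the half you yourself flag as ``the main obstacle'': the inclusion $\mathcal{D}\cap\diff^{\,p^i}_KL\subseteq L\cdot\mathcal{A}_{L/K,i}$, which is what forces $L/L^{\mathcal{D}}$ to be modular.  Your proposed symbol-algebra argument does not obviously close it, for three concrete reasons.  First, the identification of (the image of) $\mathcal{A}_{L/K,i}$ with $\overline{\Gamma}^{[p^i]}(\der_KL)$ only concerns operators of order exactly $p^i$; but $\mathcal{A}_{L/K,i}$ contains operators of every order between $p^{i-1}$ and $p^i$ (e.g.\ operators of order $2p$ in $\mathcal{A}_{L/K,2}$), whose symbols live in other graded pieces, so the divided-power identities in degree $p^i$ do not govern all of $\mathcal{A}_{L/K,i}$.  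Second, an element of $\mathcal{D}$ of order $\le p^i$ can arise as a sum of products whose individual orders exceed $p^i$ but whose top symbols cancel; passing to $\gr^\bullet\diff_KL$ says nothing about the surviving lower-order tail, and the ``standard filtration argument'' you invoke is exactly where the work lies.  Third, for $D_1,D_2\in\mathcal{A}_{L/K,j}$ the commutator $[D_1,D_2]$ has order $\le 2p^j-1$ and preserves $L^{p^m}$ only for $m\le j$, so there is no a priori reason it lies in $L\cdot\mathcal{A}_{L/K,i}$ for the relevant $i>j$; your argument must account for such terms.  A route that avoids all of this is to let $F_0$ be the minimal subfield with $L/F_0$ modular (whose existence the paper records after Theorem \ref{Sweedlertheorem}) and to prove the single inclusion $\mathcal{A}_{L/K,i}\subseteq\diff_{F_0}L$, i.e.\ that every operator of order $\le p^i$ preserving each $L^{p^j}$, $j\le i$, is automatically $F_0$-linear; together with your maximality half this yields $\mathcal{D}=\diff_{F_0}L$ at once.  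But that inclusion also requires proof, and until either it or your graded-algebra reduction is actually carried out, the corollary is not established.
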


\bibliographystyle{amsplain}
\bibliography{Dissertation_Bibliography}
\end{document}